\def\r{\mathbb R}
\def\m{\mathbb M}
\def\s{\mathbb S}
\def\h{\mathbb H}
\newtheorem{theorem}{Theorem}[section]
 \newtheorem{proposition}[theorem]{Proposition}
\theoremstyle{definition}
\newtheorem{example}[theorem]{Example}
\begin{document}

\title{Torqued and anti-torqued vector fields on hyperbolic spaces}

\author{Muhittin Evren Aydin$^1$}
\address{$^1$Department of Mathematics, Faculty of Science, Firat University, 23200, Elazig,   T\"urkiye}
\email{meaydin@firat.edu.tr}
\author{ Adela Mihai$^2$}
 \address{$^2$Technical University of Civil Engineering Bucharest,
Department of Mathematics and Computer Science, 020396, Bucharest, Romania
and Transilvania University of Bra\c{s}ov, Interdisciplinary Doctoral
School, 500036, Bra\c{s}ov, Romania}
 \email{adela.mihai@utcb.ro, adela.mihai@unitbv.ro}
\author{ Cihan Ozgur$^3$}
 \address{$^3$Department of Mathematics, Faculty of Arts and Sciences, Izmir Democracy University, 35140,  Izmir, T\"urkiye}
 \email{cihan.ozgur@idu.edu.tr}

\keywords{space form; hyperbolic space; torqued vector field; anti-torqued vector field}
\subjclass{53B20, 53C24, 53C25}
\begin{abstract}
In this paper, we study the existence of torqued and anti-torqued vector fields on the hyperbolic ambient space $\h^n$. Although there are examples of proper torqued vector fields on open subsets of $\h^n$, we prove that there is not a proper torqued vector field globally defined on $\h^n$. Similarly, we have another non-existence result for anti-torqued vector fields  as long as their conformal scalar is a non-constant function satisfying certain conditions. When the conformal scalar is constant, some examples of anti-torqued vector fields are provided.
\end{abstract}
\maketitle

\section{Introduction} \label{intro}

Let $(M,\langle, \rangle)$ be a Riemannian manifold, $\omega$ a $1$-form and $f$ a smooth function on $M$. Let also $\mathcal{V}$ be a nowhere zero smooth vector field on $M$. In 1944, Yano \cite{ya0} introduced a wide class of smooth vector fields known as {\it torse-forming} vector fields. A torse-forming vector field $\mathcal{V}$ is defined by the property that
\begin{equation}\label{tors}
\nabla^0_X \mathcal{V}=f X +  \omega(X) \mathcal{V}, \quad \forall X\in \mathfrak{X}(M),
\end{equation}
where $\nabla^0$ is the Levi-Civita connection on $(M,\langle, \rangle)$. We call the $1$-form $\omega$ and the function $f$ the {\it generating form} and {\it conformal scalar} (or {\it potential function}) of $\mathcal{V}$, respectively (see \cite{bo,mih0}). Denote by $\mathcal{W}$ the dual to $\omega$ on $M$, i.e. $\langle \mathcal{W}, X \rangle =\omega (X)$, for every $X \in \mathfrak{X}(M)$. We call $\mathcal{W}$ {\it generative} of $\mathcal{V}$.

An immediate class among such vector fields appear when $\omega$ is particularly assumed to be a zero map on $M$, which we call {\it concircular} vector fields \cite{s0,ya1}. This class has remarkable applications in Physics \cite{c0,dr,mam0,mam1,mam2,mam3}. Similarly, we have the class of {\it recurrent} vector fields when $f$ is identically zero \cite{cv}.

Another class of torse-forming vector fields is the {\it proper} ones, i.e., the generating form and conformal scalar are nowhere zero. More clearly, this condition means that there is not an open subset of $M$ such that $\omega$ and $f$ are identically zero. There are non-trivial examples of such vector fields in various ambient spaces. For every rotational hypersurface in the Euclidean $n$-dimensional space $\r^n$ with the axis passing through origin, the tangential part of the position vector field $\Phi$ is a proper torse-forming vector field \cite{c00}. Another non-trivial example is on the hypersphere $\s^{n-1}$ of $\r^n$; if $(x_1,...,x_n)$ are the canonical coordinates of $\r^n$, then $\mathcal{V}=e^{-x_1}(\partial_1  - x_1\Phi),$ with $\partial_1:=\partial/\partial x_1$, is a proper torse-forming vector field on $\s^{n-1}$ \cite{id}.

In this paper, we are interested in two particular types of proper torse-forming vector fields, called torqued and anti-torqued vector fields. In principle, these arise by emposing additional conditions on the generative $\mathcal{W}$. If $\mathcal{V}$ is a proper torse-forming vector field then these two types are defined based on whether $\mathcal{W}$ is perpendicular or parallel to $\mathcal{V}$. 

More explicitly, Chen \cite{crs3} defined a {\it torqued} vector field as a torse-forming vector field $\mathcal{V}$ with $\langle \mathcal{V}, \mathcal{W}\rangle =0$ on $M$. It was proved that a twisted product of an open interval and a Riemannian (or Lorentzian) manifold admits a torqued vector field (see \cite{crs3}). Such vector fields were used to characterize Ricci solitons \cite{c01}. On Einstein manifolds, every torqued vector field is proportional to a concircular vector field $\mathcal{T}$ by a smooth function $\lambda$ with $\mathcal{T}\lambda =0$ \cite{c01}. In addition, Deshmukh et al. proved that there is not a torqued vector field globally defined on the two standard simply-connected models of Riemannian space forms, the sphere $\s^n$ and the Euclidean space $\r^n$ \cite{dta}. In contrast, an example of such a vector field can be established on an open subset of $\r^n$ with nowhere vanishing conformal scalar \cite{dta}. Yoldas et al. \cite{yms} showed that a characteristic vector field of a Kenmotsu manifold cannot be torqued. Submanifolds of Kenmotsu manifolds were also characterized by using the torqued vector fields \cite{yms}.

The second type of proper torse-forming vector fields that we focus on is the {\it anti-torqued} vector fields introduced by Deshmukh et al. \cite{dan} (see also \cite{cs}). These are proper torse-forming vector fields with $\mathcal{W}=-f\mathcal{V}$ on $M$, which is contrary to the case of torqued vector fields with $\mathcal{W} \perp \mathcal{V}$ on $M$. Denote by $\nu$ the dual to $\mathcal{V}$. Then, by \eqref{tors}, an anti-torqued vector field $\mathcal{V}$ fulfills
\begin{equation}\label{a-tor}
\nabla^0_X \mathcal{V}=f (X - \nu (X)\mathcal{V}), \quad \forall X\in \mathfrak{X}(M).
\end{equation}
Since an anti-torqued vector field $\mathcal{V}$ is nowhere zero on $M$, its dual $\nu$ is also nowhere vanishing. Therefore, $\mathcal{V}$ is always proper as long as it is not parallel. In contrast, there are non-proper torqued vector fields that are not parallel, e.g. concircular vector fields. In addition, there are anti-torqued vector fields $\mathcal{V}$, where $\mathcal{V}=\nabla^0 g$, for some smooth function $g$ on $M$, which is not valid for the torqued vector fields. Anti-torqued vector fields are also known as {\it concurrent-recurrent} vector fields when $f$ is particularly assumed to be a nonzero constant \cite{na}. The necessary and sufficient condition for a Riemannian manifold to carry an anti-torqued vector field is that it is locally a warped product $I \times_\lambda F$, where $I$ is an open interval and $F$ a Riemannian manifold, as one can be seen in \cite[Theorem 3.1]{amc} and \cite[Theorem 3]{na}. 

Assuming the existence of a vector field nowhere zero on Riemannian manifolds governs their geometry and topology (see \cite{amc,dk,dan,dta,dmtv,id}). Motivated by this, first we study the problem of determining whether a torqued vector field is globally defined on the third standard model $\h^n$ of Riemannian space forms. The same problem is also considered for the anti-torqued case.

This paper is organized as follows. After giving the basic concepts in Section \ref{pre} relating to Riemannian (sub)manifolds, we provide non-trivial examples of torqued and anti-torqued vector fields in Section \ref{examp}. Moreover, we have a non-existence result for proper torqued vector fields on Riemannian manifolds as long as they are of constant length (Proposition \ref{torq-nonconst}). In contrast, there exist other torse-forming vector fields of constant length on Riemannian manifolds (Propositions \ref{anti-unit} and \ref{tors-cons}). In Section 4, we prove that a proper torqued vector field with a nowhere zero conformal scalar cannot be globally defined on $\h^n$ (Theorem \ref{torq-hyp-nonexis}). Similarly, when the conformal scalar is a non-constant function satisfying certain conditions, there is not an anti-torqued vector field globally defined on $\h^n$  (Theorem \ref{anti-hyp-nonexis}).

\section{Preliminaries} \label{pre}

Let $(M, \langle , \rangle) $ be a Riemannian manifold of dimension $n$, and let $\nabla^0$ the Levi-Civita connection of $M$. The Riemannian curvature tensor is defined by%
\begin{equation*}
R( X,Y) Z=\nabla^0_X\nabla^0_YZ-\nabla^0_Y\nabla^0_XZ-\nabla^0_{\left[ X,Y\right] }Z, \quad X,Y,Z \in \mathfrak{X}(M),
\end{equation*}
where $[ ,]$ is the bracket operation. Let $\Gamma $ be a plane section of $T_{p}M$ with a given basis $\{ x,y\}$. The sectional curvature $K( \Gamma ) $ of $\Gamma $ is defined by
\begin{equation*}
K( x,y) =\frac{\langle R( x,y) y,x \rangle }{\langle x,x \rangle \langle y,y\rangle -\langle x,y \rangle ^2}.
\end{equation*}

A Riemannian manifold $\m^n(c)$ is called a space form if $K$ is a constant $c$ for every plane section at every point. The standard simply-connected models of a Riemannian space form $\m^n(c)$ are the Euclidean space $\r^n=\m^n(0)$, the sphere $\s^n=\m^n(1)$ and the hyperbolic space $\h^n=\m^n(-1)$.

There are several models for the hyperbolic ambient space, two of them are utilized in this paper. Let $(x_1,...,x_n)$ be the canonical coordinates of $\r^n$. The first one is the upper half space model $\h^n=\{(x_1,...,x_n) \in \r^n : x_n>0\},$ which is endowed with the metric
\begin{equation*}
\langle ,\rangle =\frac{1}{x_n^2}\sum_{i=1}^{n}dx_i^2.
\end{equation*}

Let $\r_1^{n+1}$ be the Lorentz-Minkowski space endowed with the canocial Lorentzian metric    
\begin{equation*}
\langle ,\rangle =-dx_1^2+\sum_{i=2}^{n+1}dx_i^2.
\end{equation*}
Then, the second model that we concentrate on is the hyperboloid model 
\begin{equation*}
\h^n=\{P=(x_1,...,x_{n+1}) \in \r_1^{n+1} : \langle P,P \rangle=-1\}.
\end{equation*}

Let $N$ be a submanifold in a Riemannian manifold $M$. The Gauss formula is given by
\begin{equation*}
\nabla^0_XY=\nabla_XY+h(X,Y), \quad X,Y\in \mathfrak{X}(N),
\end{equation*}
where $\nabla$ is the induced connection on $N$ and $h$ is the second fundamental form. 


\section{Some Examples and Results on Torqued and Anti-Torqued Vector Fields} \label{examp}

Let $\mathcal{V}$ be a torqued vector field on a Riemannian manifold $M$, i.e.
\begin{equation*}
\nabla^0_X\mathcal{V}=fX+\omega(X)\mathcal{V}, \quad \omega(\mathcal{V})=0,    
\end{equation*}
for every $X\in\mathfrak{X}(M)$. Assume that $M$ is a twisted product $I\times_\lambda F$, where $I$ is an open interval and $F$ a Riemannian manifold, and $\mu$ is a smooth function on $F$. From \cite{c01}, it is known that 
\begin{equation*}
\mathcal{V}=\lambda \mu \frac{\partial}{\partial s}, \quad s \in I,    
\end{equation*}
is always a torqued vector field tangent to $I$, where $s$ is the arc-length parameter of $I$. 

If the ambient space $M$ is a sphere or a Euclidean space, then such vector fields cannot be defined on $M$ globally \cite{dan,dta}. However, there are examples of proper torqued vector fields on open subsets of Euclidean space (see \cite[Example 1]{dta}) as well as of hyperbolic ambient spaces.

\begin{example}\label{ex-torq}
We consider the hyperboloid model of $\h^n$. If $\Phi: \h^n \to \r^{n+1}_1$ is the standard isometric immersion, then by the Gauss formula we write
\begin{equation*}
\nabla^0_XY=\nabla_XY + \langle X,Y \rangle \Phi, \quad X,Y \in \mathfrak{X}(\h^n),
\end{equation*}
where $\nabla^0$ stands for the Levi-Civita connection of $\r^{n+1}_1$ and $\nabla$ the induced one on $\h^n$. Set $\mathcal{T} =P_0+\langle P_0 , \Phi \rangle \Phi $, where $P_0 \neq 0$ is a fixed vector field in $\r^{n+1}_1$. From \cite{loy1}, it follows that
\begin{equation}
\nabla_X\mathcal{T} =\langle P_0 , \Phi  \rangle X, \quad \forall X \in \mathfrak{X}(\h^n).    \label{t-con}
\end{equation}
Next, we consider the function 
\begin{equation*}
g:\widetilde{M} \subset \h^n \to \r, \quad (x_1,...,x_{n+1}) \mapsto g(x_1,...,x_{n+1})= \frac{x_1}{x_{n+1}},    
\end{equation*}
where 
\begin{equation*}
\widetilde{M}= \{(x_1,...,x_{n+1}) \in \h^n : x_1x_2x_{n+1}\neq 0, x_1 \neq x_{n+1} \}.    
\end{equation*}
We set $P_0= \partial_2 $ and $ \mathcal{V}=e^g\mathcal{T}.$ From \eqref{t-con}, it follows
\begin{equation*}
\nabla_X\mathcal{V}=e^g\langle P_0 , \Phi \rangle X +X(g)\mathcal{V}, \quad \forall X \in \mathfrak{X}(\widetilde{M}).    
\end{equation*}
Defining $f:=e^g\langle P_0 , \Phi \rangle$ and $\omega:=dg$, the above expression becomes
\begin{equation*}
\nabla_X\mathcal{V}=f X +\omega(X)\mathcal{V}, \quad \forall X \in \mathfrak{X}(\widetilde{M}).    
\end{equation*}
Because $\mathcal{V}(g)=0$, we conclude $\omega(\mathcal{V})=0$, which implies that $\mathcal{V}$ is a torqued vector field on $\widetilde{M}$. Next, we show that $\mathcal{V}$ is a proper torqued vector field on $\widetilde{M}$. The conformal scalar of $\mathcal{V}$ is given by
\begin{equation*}
f(x_1,...,x_{n+1})=x_2 e^{x_1/x_{n+1}},
\end{equation*}
is clearly nowhere zero on $\widetilde{M}$. Moreover, the generating form of $\mathcal{V}$ is
\begin{equation*}
\omega=\frac{1}{x_{n+1}}dx_1-\frac{x_1}{x^2_{n+1}}dx_{n+1} \neq 0 \text{ on } \widetilde{M}.
\end{equation*}
Consequently, $\mathcal{V}$ is proper torqued vector field on $\widetilde{M}$. 
\end{example}

On any Riemannian manifold, we have a non-existence result for a proper torqued vector field when it is of constant length. See also \cite[Theorem 3]{ag}.

\begin{proposition} \label{torq-nonconst}
A proper torqued vector field on a Riemannian manifold is never of constant length.
\end{proposition}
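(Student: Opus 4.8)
The plan is to suppose, for contradiction, that $\mathcal{V}$ is a proper torqued vector field on $M$ with $\langle \mathcal{V},\mathcal{V}\rangle = c$ constant (necessarily $c>0$ since $\mathcal{V}$ is nowhere zero). Differentiating this relation along an arbitrary $X\in\mathfrak{X}(M)$ gives $0 = X\langle\mathcal{V},\mathcal{V}\rangle = 2\langle\nabla^0_X\mathcal{V},\mathcal{V}\rangle$. I would then substitute the torse-forming equation \eqref{tors} into the right-hand side: since $\nabla^0_X\mathcal{V} = fX + \omega(X)\mathcal{V}$, we get
\begin{equation*}
0 = 2\langle fX + \omega(X)\mathcal{V}, \mathcal{V}\rangle = 2f\langle X,\mathcal{V}\rangle + 2\omega(X)\langle\mathcal{V},\mathcal{V}\rangle = 2f\langle X,\mathcal{V}\rangle + 2c\,\omega(X).
\end{equation*}

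The next step is to exploit the defining torqued condition $\omega(\mathcal{V}) = 0$, equivalently $\langle\mathcal{W},\mathcal{V}\rangle = 0$. Choosing $X = \mathcal{V}$ in the displayed identity kills the second term and leaves $2f\langle\mathcal{V},\mathcal{V}\rangle = 2cf = 0$. Since $c>0$, this forces $f \equiv 0$ on $M$. But a proper torqued vector field has, by definition, a nowhere zero conformal scalar $f$, so $f$ cannot vanish identically (indeed cannot vanish anywhere). This contradiction shows no such constant-length vector field exists, which is exactly the claim.

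I do not expect any genuine obstacle here: the argument is a one-line differentiation of the length function combined with the two structural hypotheses (the torse-forming equation and the orthogonality $\omega(\mathcal{V})=0$). The only point requiring a word of care is the logical structure of "proper" — one should be explicit that properness means $f$ is nowhere zero (so in particular not identically zero), which is the feature that is violated; the generating form $\omega$ plays no role beyond the orthogonality $\omega(\mathcal{V})=0$. It is worth remarking, as a sanity check against the later results in the paper, that this argument genuinely uses $\omega(\mathcal{V})=0$: for a general torse-forming field of constant length one instead obtains $f\langle X,\mathcal{V}\rangle + c\,\omega(X) = 0$, i.e. $\omega = -(f/c)\nu$, which is the anti-torqued relation $\mathcal{W} = -f\mathcal{V}$ up to scaling — consistent with Propositions \ref{anti-unit} and \ref{tors-cons}, where constant-length torse-forming fields do exist.
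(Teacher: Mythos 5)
Your argument is correct and is essentially the paper's proof: both differentiate the (squared) length, substitute the torse-forming equation, and plug in $X=\mathcal{V}$ together with $\omega(\mathcal{V})=0$ to force $f\equiv 0$, contradicting properness. The only inessential difference is that the paper also substitutes $X=\mathcal{W}$ to conclude $\omega\equiv 0$ as well, whereas your single substitution already yields the contradiction.
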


\begin{proof}
Let $\mathcal{V}$ be a torqued vector field on a Riemannian manifold $M$, i.e.
\begin{equation*}
 \nabla^0_X\mathcal{V}=fX+\omega(X)\mathcal{V}, \quad \omega(\mathcal{V})=0,   
\end{equation*}
for every $X\in\mathfrak{X}(M)$. By contradiction, assume that
\begin{equation*}
0=X(|\mathcal{V}|)=f|\mathcal{V}|^{-1}\langle X,\mathcal{V} \rangle +|\mathcal{V}|\langle X,\mathcal{W} \rangle , \quad \forall X \in \mathfrak{X}(M),  
\end{equation*}
where $\mathcal{W}$ is generative of $\mathcal{V}$. Writing $X=\mathcal{V}$ and $X=\mathcal{W}$, we see that $f$ and $\omega$ are identically zero, which is not possible.
\end{proof}

Let $\mathcal{V}$ be an anti-torqued vector field on a Riemannian manifold $M$. If $M$ is a Euclidean space, then there are examples of $\mathcal{V}$ globally defined on $M$ but not on a sphere (see \cite{dan}). Next we give an example of $\mathcal{V}$ when $M$ is the hyperbolic space $\h^n$. 
\begin{example} \label{ex3.1}
In the upper-half space model of $\h^n$, an orthonormal basis $\{e_1,...,e_n \}$ can be chosen as
\begin{equation*}
e_j=x_n\partial _j, \quad e_n=-x_n\partial _n, \quad j=1,...,n-1.    
\end{equation*}
The Levi-Civita connection gives, for $i=1,...,n$ and $j=1,...,n-1,$ 
\begin{equation*}
\nabla^0 _{e_j}e_{n}=e_j, \quad \nabla^0 _{e_n}e_n=0, \quad \nabla^0
_{e_i}e_j=0 \text{ } (i\neq j), \quad \nabla^0 _{e_j}e_j=-e_n.   
\end{equation*}
It is direct to see that $e_n$ is an anti-torqued vector field, namely
\begin{equation*}
\nabla^0 _{X}e_n =X-\langle X,e_{n}\rangle e_n, \quad \forall X\in \mathfrak{X}(\h^n),    
\end{equation*}
where the conformal scalar is identically $1$.
\end{example}

Similar to Example \ref{ex-torq}, on open subsets of $\h^n$, we also may have other examples of anti-torqued vector fields.

\begin{example}\label{ex-antitorq}
As in Example \ref{ex-torq}, we consider the hyperboloid model of $\h^n$. We introduce 
\begin{equation*}
 g:\widetilde{M} \subset \h^n \subset \r_1^{n+1} \to \r, \quad (x_1,...,x_{n+1}) \mapsto g(x_1,...,x_{n+1})= \frac{1}{x_{n+1}},   
\end{equation*}
where
\begin{equation*}
 \widetilde{M}=\{(x_1,...,x_{n+1}) \in \h^n : x_{n+1}\neq 0 \}.   
\end{equation*}
We set $P_0=\partial_{n+1}$, 
$\mathcal{T}=P_0+\langle P_0 , \Phi \rangle \Phi=\partial_{n+1}+x_{n+1}\Phi$,  and $\mathcal{V}=g\mathcal{T},
$
where $\Phi$ is the position vector field of $\r_1^{n+1}$. We will show that $\mathcal{V}$ is an anti-torqued vector field on $\widetilde{M}$. Let $\nabla$ be the induced Levi-Civita connection on $\h^n$ from $\r_1^{n+1}$. Since we know that
\begin{equation*}
\nabla_X\mathcal{T}=\langle P_0 , \Phi \rangle X, \quad \forall X \in \mathfrak{X}(\h^n),    
\end{equation*}
the following expression occurs
\begin{equation*}
\nabla_X\mathcal{V}= \frac{1}{x_{n+1}}\langle P_0 , \Phi \rangle X +X\left( \frac{1}{x_{n+1}} \right )\mathcal{T}    , \quad \forall X \in \mathfrak{X}(\widetilde{M}), 
\end{equation*}
or equivalently
\begin{equation}
\nabla_X\mathcal{V}=  X +X\left( \frac{1}{x_{n+1}} \right )\mathcal{T}, \quad \forall X \in \mathfrak{X}(\widetilde{M}). \label{ex2-01}
\end{equation}
A direct calculation gives
\begin{equation*}
X\left( \frac{1}{x_{n+1}} \right )=- \frac{1}{x_{n+1}^2}\langle P_0, X\rangle.   
\end{equation*}
Since $\langle \Phi ,X \rangle =0$ for every $X\in \mathfrak{X}(\h^n)$, we can have 
\begin{equation*}
\langle P_0, X\rangle= \langle P_0 + \langle P_0 , \Phi \rangle \Phi , X \rangle =\langle  \mathcal{T},X \rangle, \quad \forall X \in \mathfrak{X}(\h^n). 
\end{equation*}
Writing in Equation \eqref{ex2-01}, we obtain
\begin{equation*}
\nabla_X\mathcal{V}=X-\left \langle \frac{1}{x_{n+1}} \mathcal{T}, X \right \rangle \left (\frac{1}{x_{n+1}}\mathcal{T} \right ), \quad X \in \mathfrak{X}(\widetilde{M})    
\end{equation*}
or equivalently
\begin{equation*}
\nabla_X\mathcal{V}=X-\left \langle \mathcal{V}, X \right \rangle \mathcal{V}, \quad \forall X \in \mathfrak{X}(\widetilde{M}),  
\end{equation*}
which implies that $\mathcal{V}$ is an anti-torqued vector field on $\widetilde{M} \subset \h^n$ with constant conformal scalar $1$.
\end{example}

In contrast to Proposition \ref{torq-nonconst}, there are examples of anti-torqued vector fields of constant length on a Riemannian manifold $M$ as in Example \ref{ex3.1}. Moreover, if an anti-torqued vector field $\mathcal{V}$ on $M$ is of constant length, then it must be a unit geodesic vector field. Recall that a smooth vector field $\mathcal{T}$ on a Riemannian manifold is called a {\it unit geodesic} vector field if $\nabla^0_{\mathcal{T}}\mathcal{T}=0$. Hence, if $\mathcal{V}$ is a non-parallel anti-torqued vector field  whose length is a nonzero constant, then we have 
\begin{equation*}
0=X(|\mathcal{V}|)=f|\mathcal{V}|^{-1}(1-|\mathcal{V}|^2)\langle X,\mathcal{V} \rangle , \quad \forall X \in \mathfrak{X}(M),   
\end{equation*}
which implies $|\mathcal{V}|=1$. It then follows from \eqref{a-tor} that $\nabla^0_{\mathcal{V}}\mathcal{V}=0$.

Therefore, we have proved
\begin{proposition}  \label{anti-unit}
An anti-torqued vector field of constant length on a Riemannian manifold is always a unit geodesic vector field. 
\end{proposition}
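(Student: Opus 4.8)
The plan is to differentiate the squared length $|\mathcal{V}|^2$ along an arbitrary direction and read off the consequences of \eqref{a-tor}. For any $X\in\mathfrak{X}(M)$, using $\nu(X)=\langle X,\mathcal{V}\rangle$, one computes
\begin{equation*}
X(|\mathcal{V}|^2)=2\langle\nabla^0_X\mathcal{V},\mathcal{V}\rangle=2f\big(\langle X,\mathcal{V}\rangle-\nu(X)|\mathcal{V}|^2\big)=2f\,\langle X,\mathcal{V}\rangle\big(1-|\mathcal{V}|^2\big).
\end{equation*}
Since an anti-torqued vector field is nowhere zero, $|\mathcal{V}|$ is a positive function; assuming it is constant makes the left-hand side vanish for every $X$.

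Next I would specialize this identity to $X=\mathcal{V}$, obtaining $0=2f\,|\mathcal{V}|^2\big(1-|\mathcal{V}|^2\big)$ pointwise. The only step needing a word of care is the cancellation of the factor $f$: this is precisely the properness that is part of the definition of an anti-torqued vector field — recall that such a field is automatically proper unless it is parallel, and a parallel field would force $f\equiv 0$ in \eqref{a-tor} — so $f$ is nowhere zero. Together with $|\mathcal{V}|$ being nowhere zero, this yields $1-|\mathcal{V}|^2=0$, i.e. $|\mathcal{V}|\equiv 1$.

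Finally, feeding $|\mathcal{V}|=1$, equivalently $\nu(\mathcal{V})=|\mathcal{V}|^2=1$, back into \eqref{a-tor} with $X=\mathcal{V}$ gives $\nabla^0_{\mathcal{V}}\mathcal{V}=f(\mathcal{V}-\nu(\mathcal{V})\mathcal{V})=0$. Hence $\mathcal{V}$ has unit length and is self-parallel along its own flow, which is exactly the definition of a unit geodesic vector field recalled above, completing the proof.

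I do not expect any genuine obstacle: the argument is essentially a two-line computation plus the observation that $f\neq 0$. If anything, the subtle point is bookkeeping around the hypotheses — making explicit that ``anti-torqued'' already entails properness, so that the division by $f$ is legitimate and the degenerate case $f\equiv 0$ (which would make $\mathcal{V}$ parallel of arbitrary constant length) does not arise.
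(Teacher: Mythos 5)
Your proof is correct and follows essentially the same route as the paper: differentiate the (squared) length, use \eqref{a-tor} to get $0=f\langle X,\mathcal{V}\rangle(1-|\mathcal{V}|^2)$, specialize to $X=\mathcal{V}$ and cancel the nowhere-zero factors $f$ and $|\mathcal{V}|$ to conclude $|\mathcal{V}|=1$, then substitute back into \eqref{a-tor} to obtain $\nabla^0_{\mathcal{V}}\mathcal{V}=0$. Your remark that properness (hence $f$ nowhere zero) is what legitimizes the cancellation matches the paper's own handling of that point.
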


Notice that Proposition \ref{anti-unit} is not valid for any proper torqued vector field $\mathcal{V}$ due to Proposition \ref{torq-nonconst}. Moreover, otherwise it follows from \eqref{tors} that
\begin{equation*}
0=\nabla^0_{\mathcal{V}}\mathcal{V}=f\mathcal{V}+\omega(\mathcal{V})\mathcal{V},    
\end{equation*}
or equivalently $f\mathcal{V}=0$, which is not possible. On the contrary, every proper torqued vector field is always a non-trivial geodesic vector field whose potential function is the conformal scalar $f$ of $\mathcal{V}$ (see  \cite{dmtv}).

A final conclusion regarding the  torse-forming vector fields of constant length is the following.


\begin{proposition}\label{tors-cons}
If a proper torse-forming vector field $\mathcal{V}$ on a Riemannian manifold is of constant length, then it is parallel to the generative. Particularly, every unit torse-forming vector field is an anti-torqued vector field. 
\end{proposition}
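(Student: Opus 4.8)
The plan is to push through the same constant-length computation that already drives Proposition \ref{torq-nonconst} and Proposition \ref{anti-unit}. Let $\mathcal{V}$ be a proper torse-forming vector field of constant length on $(M,\langle,\rangle)$, so that \eqref{tors} holds with generative $\mathcal{W}$, i.e. $\omega(X)=\langle\mathcal{W},X\rangle$. Since $|\mathcal{V}|$ is constant we have $X(|\mathcal{V}|^2)=0$ for all $X\in\mathfrak{X}(M)$; expanding the left-hand side and using \eqref{tors} gives
\[
0=\tfrac12 X(|\mathcal{V}|^2)=\langle\nabla^0_X\mathcal{V},\mathcal{V}\rangle=f\langle X,\mathcal{V}\rangle+|\mathcal{V}|^2\langle X,\mathcal{W}\rangle,\qquad\forall X\in\mathfrak{X}(M).
\]
By nondegeneracy of the metric this is equivalent to the pointwise identity $f\mathcal{V}+|\mathcal{V}|^2\mathcal{W}=0$ on $M$.

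From here I would read off both claims. Because a torse-forming field is nowhere zero, $|\mathcal{V}|^2$ is a nowhere zero smooth function and the identity rewrites as $\mathcal{W}=-\bigl(f/|\mathcal{V}|^2\bigr)\mathcal{V}$; since $\mathcal{V}$ is proper, $f$ is nowhere zero, hence the scalar factor never vanishes and $\mathcal{W}$ is everywhere a nonzero multiple of $\mathcal{V}$ — that is, $\mathcal{V}$ is parallel to its generative. If in addition $|\mathcal{V}|=1$, the identity collapses to $\mathcal{W}=-f\mathcal{V}$, and substituting $\omega(X)=\langle\mathcal{W},X\rangle=-f\nu(X)$ back into \eqref{tors} reproduces exactly \eqref{a-tor}, so $\mathcal{V}$ is anti-torqued. (Here a unit torse-forming field is automatically proper unless it is parallel on some open set, the degenerate situation excluded by the torqued/anti-torqued setting, cf. the discussion following \eqref{a-tor}.)

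I do not anticipate a real obstacle: the argument is a single differentiation of $|\mathcal{V}|^2$ followed by elementary linear algebra, entirely parallel to the computations already carried out above. The only steps needing a word of justification are the division by $|\mathcal{V}|^2$ (legitimate because $\mathcal{V}$ is nowhere zero) and the appeal to properness to guarantee $f\neq 0$, which is precisely what makes ``$\mathcal{W}$ parallel to $\mathcal{V}$'' a genuine proportionality rather than the vacuous statement $\mathcal{W}=0$. It is worth noting that this proposition dovetails with Proposition \ref{anti-unit}: for a constant-length torse-forming field the generative is completely determined by $f$ and $\mathcal{V}$, and in the unit normalization this forces membership in the anti-torqued class.
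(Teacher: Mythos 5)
Your proof is correct and follows essentially the same route as the paper's: differentiate $|\mathcal{V}|^2$, use \eqref{tors} to get $f\langle X,\mathcal{V}\rangle+|\mathcal{V}|^2\omega(X)=0$ for all $X$, and conclude $\mathcal{W}=-\bigl(f/|\mathcal{V}|^2\bigr)\mathcal{V}$, with the unit case yielding \eqref{a-tor}. Your added remarks on properness and the nonvanishing of the scalar factor are fine refinements of the same argument.
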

\begin{proof}
Let $\mathcal{V}$ be a torse-forming vector field on a Riemannian manifold $M$ of constant length $|\mathcal{V}|>0$, and let $\mu$ be the dual of $\mathcal{V}$.  Using \eqref{tors}, we have
\begin{equation*}
 0=\langle \nabla^0_X\mathcal{V} , \mathcal{V} \rangle =f \langle X, \mathcal{V} \rangle +|\mathcal{V}|^{2}\omega(X),   \quad \forall X\in \mathfrak{X}(M),
\end{equation*}
or equivalently $\omega=-|\mathcal{V}|^{-2}f  \mu.$ This means that $\mathcal{V}$ is parallel to its generative. Moreover, if $|\mathcal{V}|=1$, then it is an anti-torqued vector field.
\end{proof}

\section{Non-Existence Results on (Anti-)Torqued Vector Fields on $\h^n$} \label{char}

From \cite{dta}, we know that there are no proper torqued vector fields globally defined on either a sphere or on a Euclidean ambient space. In this section, we establish a similar non-existence result for those vector fields on the other standard complete simply-connected model of Riemannian space forms, more precisely on hyperbolic spaces.

\begin{theorem} \label{torq-hyp-nonexis}
There is not a proper torqued vector field globally defined on $\h^n$ with a conformal scalar $f$ that is nowhere zero.
\end{theorem}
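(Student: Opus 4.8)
The plan is to argue by contradiction: suppose $\mathcal{V}$ is a proper torqued vector field globally defined on $\h^n$, with generating form $\omega$, conformal scalar $f$ nowhere zero, and generative $\mathcal{W}$ satisfying $\langle \mathcal{V},\mathcal{W}\rangle=0$. Since $\mathcal{V}$ is torse-forming with $K\equiv -1$, the first step is to compute the curvature identity obtained from $R(X,Y)\mathcal{V}=\nabla^0_X\nabla^0_Y\mathcal{V}-\nabla^0_Y\nabla^0_X\mathcal{V}-\nabla^0_{[X,Y]}\mathcal{V}$ using \eqref{tors}. Expanding each term and using $\langle R(X,Y)\mathcal{V}, Z\rangle = -(\langle Y,\mathcal{V}\rangle\langle X,Z\rangle - \langle X,\mathcal{V}\rangle\langle Y,Z\rangle)$ for the hyperbolic space form, one gets an identity of the schematic shape
\begin{equation*}
(Xf)\,Y - (Yf)\,X + \big[(\nabla^0_X\omega)(Y) - (\nabla^0_Y\omega)(X)\big]\mathcal{V} + \big[\omega(Y)f - \omega(X)f\big]\cdot(\text{stuff}) = \langle Y,\mathcal{V}\rangle X - \langle X,\mathcal{V}\rangle Y,
\end{equation*}
and then by taking suitable components (pairing with $\mathcal{V}$, with $\mathcal{W}$, and with vectors orthogonal to both) one extracts scalar equations. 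Concretely I expect to derive: (i) $Xf = a\langle X,\mathcal{V}\rangle + b\,\omega(X)$ for some functions $a,b$ determined by $f$ and $|\mathcal{V}|$; and (ii) a relation forcing $d\omega$ to have a specific form. The cleanest route is probably to first record that $\mathrm{grad}\, f$ lies in $\mathrm{span}\{\mathcal{V},\mathcal{W}\}$.

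Next I would exploit the torqued condition $\langle\mathcal{V},\mathcal{W}\rangle=0$ more forcefully. Differentiating $\langle\mathcal{V},\mathcal{W}\rangle=0$ and combining with \eqref{tors} produces a compatibility equation relating $\nabla^0\mathcal{W}$ to $f$, $\omega$, and $\nabla^0\mathcal{V}$; in particular one finds (as in \cite{dta}) that along $\mathcal{V}$ the function $f$ and the form $\omega$ satisfy an ODE. The key structural fact I want to reach is that $f$ and $|\mathcal{V}|$ are, up to the flow of $\mathcal{V}/|\mathcal{V}|$, essentially determined, and that $\mathcal{V}$ foliates $\h^n$ by level sets on which $f$ is constant — forcing $f$ to be constant along a codimension-one foliation while varying transversally. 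Since $f$ is nowhere zero, $1/f$ is a well-defined smooth function on all of $\h^n$, and I would aim to show $\mathrm{grad}(1/f)$ is a concircular-type vector field (or that $\log|f|$ satisfies a Hessian equation of the form $\mathrm{Hess}(\log|f|) = \varphi\langle,\rangle + \psi\,\mu\otimes\mu$), which on the complete simply-connected $\h^n$ is highly restrictive.

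The main obstacle — and the crux of the argument — is turning these local differential identities into a genuinely \emph{global} contradiction on $\h^n$. The analogous results on $\s^n$ and $\r^n$ in \cite{dta} use compactness (for $\s^n$) or the structure of $\r^n$ directly; for $\h^n$ neither is available, so one must instead use completeness together with the explicit hyperbolic model. My intended endgame: reduce to showing that a function $h$ (built from $f$, e.g. $h = 1/f$ or $h=f^{-1}|\mathcal{V}|$-type combination) must satisfy $\mathrm{Hess}\,h = h\langle,\rangle$ or $\mathrm{Hess}\,h + h\langle,\rangle = (\text{rank-one correction})$ globally; but the rank-one correction term (coming from the $\mathcal{V}$-direction in the torqued structure) obstructs $h$ from being a genuine concircular function, and one shows the correction cannot be consistently globalized — either $\mathcal{V}$ would have to vanish somewhere (contradicting ``nowhere zero'') or $\omega$ and $f$ would have to vanish on an open set (contradicting ``proper''). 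I would double-check the boundary/asymptotic behavior in the upper half-space model $\{x_n>0\}$: the metric $x_n^{-2}\sum dx_i^2$ means a globally defined nowhere-zero $\mathcal{V}$ with these Hessian constraints on $f$ tends to force $f\to 0$ along some end, contradicting $f$ nowhere zero. Handling this limiting behavior rigorously is where I expect the real work to lie.
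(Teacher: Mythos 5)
Your opening step matches the paper: comparing $R(X,Y)\mathcal{V}$ computed from \eqref{tors} with the constant-curvature expression $R(X,Y)\mathcal{V}=\langle X,\mathcal{V}\rangle Y-\langle Y,\mathcal{V}\rangle X$ does indeed give, for linearly independent $\{X,Y,\mathcal{V}\}$, that $d\omega=0$ and $\langle X,\mathcal{V}\rangle=X(f)-f\omega(X)$, i.e. $\nabla f=\mathcal{V}+f\mathcal{W}$, which is your item (i) with $a=1$, $b=f$. But from there on the proposal is a plan, not a proof: the claimed foliation structure, the Hessian equation for $h=1/f$ or $\log|f|$, and the assertion that the ``rank-one correction cannot be consistently globalized'' are never derived, and the intended endgame is unsound as stated. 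In the upper half-space model, showing that $f\to 0$ along some end would not contradict the hypothesis that $f$ is nowhere zero, since $\h^n$ is non-compact and the ideal boundary is not part of the manifold; decay at infinity is perfectly compatible with a nowhere-vanishing function, so this cannot be where the contradiction comes from (and nothing in your outline actually forces such decay). You also never use the two hypotheses that do the real work: $f$ nowhere zero together with $\mathcal{V}$ nowhere zero and $\mathcal{V}\perp\mathcal{W}$.

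The missing idea is much more direct. From $\nabla f=\mathcal{V}+f\mathcal{W}$, if $\nabla f(p)=0$ then $\mathcal{V}(p)=-f(p)\mathcal{W}(p)$ with $f(p)\neq0$ and $\mathcal{V}(p)\neq0$, contradicting $\mathcal{V}\perp\mathcal{W}$; hence $\nabla f$ is nowhere zero and $f$ is a submersion. The paper then takes $\mathcal{T}=\nabla f/|\nabla f|^2$, which satisfies $\mathcal{T}(f)=1$, so its flow $\varphi_t$ translates $f$ by $t$; completeness of the flow (escape lemma) gives a diffeomorphism $(t,q)\mapsto\varphi_t(q)$ from $\r\times\Sigma$ onto $\h^n$, where $\Sigma$ is a level set of $f$, and the contradiction is extracted from the resulting structure of the level sets. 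In other words, the global obstruction is topological, obtained by flowing along the normalized gradient of $f$, not an asymptotic or boundary-behavior argument in the half-space model. Without some concrete version of this (or another genuinely global mechanism), your proposal does not close the argument.
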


\begin{proof}
The proof is by contradiction. Suppose that $\mathcal{V}$ is a proper torqued vector field globally defined on $\h^n$. Then, we have
\begin{equation} \label{torq-nonex}
\nabla^0_X \mathcal{V}=f X +  \omega(X) \mathcal{V}, \quad \omega(\mathcal{V})=0,  \quad \forall X\in \mathfrak{X}(M).
\end{equation}
By assumption, the conformal scalar $f$ is nowhere zero on $\h^n$. Denote by $\mathcal{W}$ the generative of $\mathcal{V}$ which is dual to the generating form $\omega$. Since $\omega (\mathcal{V})=0$, we conclude that $\mathcal{V} \perp \mathcal{W}$. The curvature tensor of $\h^n$ is%
\begin{equation} \label{torq-hyp-curv1}
R(X,Y)\mathcal{V}=\langle X,\mathcal{V}\rangle Y-\langle Y,%
\mathcal{V}\rangle X, \quad \forall X,Y\in \mathfrak{X}(\h^n)  .
\end{equation}%
From \eqref{torq-nonex}, it follows
\begin{eqnarray}  \label{torq-hyp-curv2}
R(X,Y)V =\nabla^0 _{X}( fY+\omega (Y)\mathcal{V}) -\nabla^0 _{Y}(
fX+\omega (X)\mathcal{V}) -  f[X,Y]-\omega ( [ X,Y]) \mathcal{V}  \nonumber \\
=\{ -Y(f)+f\omega (Y)\} X+\{ X(f)-f\omega (X)\} Y 
+d\omega (X,Y)\mathcal{V}.   \nonumber
\end{eqnarray}%
By \eqref{torq-hyp-curv1} and the above equation, we have%
\begin{equation} \label{torq-hyp-curv3}
\langle X,\mathcal{V}\rangle Y-\langle Y,\mathcal{V}%
\rangle X =\{ -Y(f)+f\omega (Y)\} X+\{ X(f)-f\omega (X)\}
Y+d\omega (X,Y)\mathcal{V}.  
\end{equation}%
Since this identity holds for every $X,Y\in \mathfrak{X}(\mathfrak{\mathbb{H}%
}^{n}),$ so does a linearly independent triple $\{ X,Y,\mathcal{V}%
\} $. Then, it must be $d\omega =0$. By \eqref{torq-hyp-curv3}, we conclude%
\begin{equation*}
\langle X,\mathcal{V}\rangle =X(f)-f\omega (X), \quad \forall X\in \mathfrak{X}(\h^n),    
\end{equation*}
which implies%
\begin{equation*}
\langle X,\mathcal{V}-\nabla f+f\mathcal{W}\rangle =0, \quad \forall X\in \mathfrak{X}(\h^n),    
\end{equation*}
or equivalently
\begin{equation*}
\nabla f=\mathcal{V}+f\mathcal{W}.    
\end{equation*}

Suppose that $\nabla f(p)=0,$ for some point $p\in \mathfrak{X}(\h^n).$ Since $f$ is nowhere zero, it follows that $f(p)\neq 0$, and hence $\mathcal{V}$ and $\mathcal{W}$ would be linearly dependent at $p,$ which contradicts the fact that $\mathcal{V} \perp \mathcal{W}$. Consequently, we are able to suppose that $\nabla f(p)\neq 0,$ for every point $p\in \h^n.$ Then, $f:\h^n\to \r$ is a submersion and every level set $\Sigma_x=f^{-1}\{ f(p)\} $ is
a compact submanifold of dimension $n-1$ (see \cite{lee}). Introduce a vector field $\mathcal{T}\in \mathfrak{X}(\h^n)$
\begin{equation*}
\mathcal{T}(p)=\frac{\nabla f(p)}{\langle \nabla f(p),\nabla f(p)\rangle }, \quad p\in \h^n.   
\end{equation*}
It is direct to observe that $\mathcal{T}(f)=1$ and so the local
one-parameter group of local transformations $\{ \varphi _{t}\}$ of $\mathcal{T}$ satisfies%
\begin{equation} \label{local}
\mathcal{T}( \varphi_{t}(p)) =f(p)+t, \quad t \in \r.
\end{equation}%
Using escape Lemma (see \cite{lee}) and expression \eqref{local}, we understand
that $\mathcal{T}$ is a complete vector field and $\{ \varphi_{t}\} $ is one-parameter group of transformations of $\h^n.$ We next define a smooth function 
\begin{equation*}
g:\r \times \Sigma _{x}\to \h^n, \quad g(t,q)=\varphi _{t}(q).    
\end{equation*}
Notice that 
\begin{equation*}
\varphi _{t_{1}}\circ \varphi _{t_{2}}=\varphi _{t_1+t_2}, \quad t_1,t_2\in \r.    
\end{equation*}
Then, for every $q\in \h^n,$ we can find a parameter $t\in \r$ and a point $\varphi_t(q)=q'\in \Sigma_x$ with $q=\varphi_{-t}(q'),$ yielding that $%
g(-t,q')=q.$ Also, if $g(t_1,q_1)=g(t_2,q_2),$ then 
\begin{equation} \label{local2}
\varphi _{t_1}(q_1)=\varphi _{t_2}(q_2) 
\end{equation}
and expression \eqref{local} gives 
\begin{equation*}
f(q_1)+t_1=f(q_2)+t_2.    
\end{equation*}
Due to $q_1,q_2\in \Sigma_x$ and $t_1,t_2\in \r$, we see
that $f(q_1)=f(q_2)$ and $t_1=t_2,$ namely from \eqref{local2} it follows $q_1=q_2.$ Hence $g$ is a one-to-one and onto mapping with its inverse%
\begin{equation*}
g^{-1}(q)=(-t,q')=(-t,\varphi_t(q)).   \end{equation*}

Consequently, we observe that $\r\times \Sigma_x$ is
diffeomorphic to $\h^n$, implying $\Sigma_x$ is diffeomorphic
to $\h^{n-1}$. This is not possible because $\Sigma_x$ is a compact subset of $\h^n$. 
\end{proof}

Although there is no an anti-torqued vector field globally defined on a sphere (see \cite{dan}), there are examples in Euclidean and hyperbolic ambient spaces, as mentioned in Section \ref{examp}. Moreover, in $3$-dimensional setting, if the metric of a Riemannian manifold admitting an anti-torqued vector field with constant conformal scalar is Ricci soliton (or gradient Ricci almost soliton), then it is of constant negative curvature \cite{na}. However, as long as the conformal scalar is a non-constant function satisfying certain conditions, we obtain a non-existence result for globally defined anti-torqued vector fields on $\h^n$.

\begin{theorem}  \label{anti-hyp-nonexis}
There is not an anti-torqued vector field globally defined on $\h^n$ whose conformal scalar is non-constant and nowhere takes values $-1,0$, or $1$.
\end{theorem}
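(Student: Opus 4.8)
The plan is to mimic the strategy used for Theorem~\ref{torq-hyp-nonexis}: assume such a $\mathcal{V}$ exists globally on $\h^n$, compute the curvature tensor in two ways, extract a differential constraint on $f$, and then derive a topological contradiction by exhibiting a compact level set of $f$ together with a diffeomorphism $\h^n \cong \r \times \Sigma$. First I would write the defining relation \eqref{a-tor}, namely $\nabla^0_X\mathcal{V} = f(X - \nu(X)\mathcal{V})$, and substitute it into $R(X,Y)\mathcal{V} = \nabla^0_X\nabla^0_Y\mathcal{V} - \nabla^0_Y\nabla^0_X\mathcal{V} - \nabla^0_{[X,Y]}\mathcal{V}$. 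On the other side, since $\h^n$ has constant curvature $-1$, we have $R(X,Y)\mathcal{V} = \langle X,\mathcal{V}\rangle Y - \langle Y,\mathcal{V}\rangle X$. Expanding the left side produces terms involving $X(f)$, $Y(f)$, $\nabla^0_X\mathcal{V}$, $X(\nu(Y))$, etc.; the $\nu(Y)\,\nu(X)\,\mathcal{V}$-type contributions cancel in the antisymmetrization, and using $\nu(\mathcal{V}) = |\mathcal{V}|^2$ together with \eqref{a-tor} one should obtain, after regrouping, an expression of the shape $\alpha(X,Y)\,\mathcal{V} + \beta(Y)X - \beta(X)Y$ for suitable $1$-forms, matching the constant-curvature side. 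As in the torqued case, testing on a linearly independent triple $\{X, Y, \mathcal{V}\}$ forces the $\mathcal{V}$-coefficient to vanish and yields an identity of the form $\langle X, \mathcal{V}\rangle = X(f) - (\text{something linear in } f \text{ and } \nu)(X)$, hence a gradient-type equation $\nabla f = (\text{coefficient})\,\mathcal{V}$; I expect the coefficient to be a function of $f$ alone, something like $(1-f^2)$ or $f^2 - 1$ up to sign — this is precisely where the hypothesis that $f$ avoids $-1, 0, 1$ will be needed, to guarantee $\nabla f$ is nowhere zero.

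Once $\nabla f$ is shown to be nowhere vanishing, the argument proceeds exactly as in the proof of Theorem~\ref{torq-hyp-nonexis}: $f \colon \h^n \to \r$ is a submersion, so each level set $\Sigma_x = f^{-1}\{f(p)\}$ is a smooth embedded hypersurface of dimension $n-1$, and one must argue it is compact. Here I would use the same normalized gradient flow, setting $\mathcal{T}(p) = \nabla f(p)/\langle \nabla f(p), \nabla f(p)\rangle$ so that $\mathcal{T}(f) \equiv 1$, invoke the escape lemma to see $\mathcal{T}$ is complete, and build the diffeomorphism $g \colon \r \times \Sigma_x \to \h^n$, $g(t,q) = \varphi_t(q)$, verifying injectivity and surjectivity via the flow relation $\varphi_{t_1}\circ\varphi_{t_2} = \varphi_{t_1+t_2}$ and $f(\varphi_t(p)) = f(p) + t$. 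This forces $\Sigma_x$ to be diffeomorphic to $\h^{n-1}$, contradicting its compactness as a closed subset of the non-compact $\h^n$ (the non-constancy of $f$ is what makes $\Sigma_x$ a proper subset, so it cannot be all of $\h^n$; and a hypersurface diffeomorphic to $\r^{n-1}$ cannot be a compact submanifold).

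The main obstacle is the first stage: correctly carrying out the curvature computation with the extra $\nu$-dependence in \eqref{a-tor} and identifying the exact coefficient function multiplying $\mathcal{V}$ in $\nabla f$. The anti-torqued relation is not as clean as the torqued one because $\nu$ is itself built from $\mathcal{V}$ (so $X(\nu(Y))$ involves $\langle \nabla^0_X\mathcal{V}, Y\rangle$, reintroducing $f$ and $\nu$), and one must be careful that the resulting constraint genuinely shows $\nabla f \parallel \mathcal{V}$ with a scalar factor controlled solely by the forbidden values $\{-1,0,1\}$ of $f$ — otherwise the submersion step collapses. A secondary subtlety is ruling out the degenerate case where the level set $\Sigma_x$ could a priori be empty or where $|\mathcal{V}|$ interacts with $f$; I would handle this by first recording, as in the discussion preceding Proposition~\ref{anti-unit}, the identity $X(|\mathcal{V}|) = f|\mathcal{V}|^{-1}(1-|\mathcal{V}|^2)\langle X,\mathcal{V}\rangle$, which together with the gradient equation pins down the relationship between $|\mathcal{V}|$ and $f$ and confirms that the hypotheses on $f$ suffice.
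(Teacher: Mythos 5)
Your proposal follows essentially the same route as the paper: the curvature comparison indeed yields $d\nu=0$ and $\nabla f=(1-f^{2})\mathcal{V}$, the hypothesis that $f$ avoids $-1,0,1$ (and is non-constant) guarantees $\nabla f$ is nowhere zero, and the paper then concludes verbatim by the normalized-gradient-flow/diffeomorphism argument of Theorem \ref{torq-hyp-nonexis}, exactly as you describe. No substantive difference.
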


\begin{proof}
By contradiction suppose that $\mathcal{V}$ is an anti-torqued vector field on $\h^n$. Using \eqref{a-tor}, we have
\begin{eqnarray*} \label{anti-hyp}
R(X,Y)\mathcal{V}=-\{ Y(f)+f^{2}\nu (Y)\} X+\{ X(f)+f^{2}\nu(X)\} Y  +\{ -X(f)\nu (Y)+Y(f)\nu (X)+d\nu (X,Y)\} \mathcal{V}. 
\end{eqnarray*}%
By \eqref{torq-hyp-curv1} and the above equation, we have%
\begin{eqnarray*}  \label{anti-hyp2}
\langle X,\mathcal{V}\rangle Y-\langle Y,\mathcal{V}\rangle X =-\{ Y(f)+f^2\nu (Y)\} X+\{
X(f)+f^2\nu (X)\} Y   \\
+\{ -X(f)\nu (Y)+Y(f)\nu (X)+d\nu (X,Y)\} \mathcal{V}. 
\nonumber
\end{eqnarray*}%
Assume that $\{ X,Y,\mathcal{V}\} $ is a linearly independent set. Then, using $\nu (X)=\langle X,\mathcal{V}\rangle ,$  
\begin{eqnarray*}
X(f)+\left( f^{2}-1\right) \nu (X) =0, \\
Y(f)+\left( f^{2}-1\right) \nu (Y) =0, \\
-X(f)\nu (Y)+Y(f)\nu (X)+d\nu (X,Y) =0,
\end{eqnarray*}%
for every $X,Y\in \mathfrak{X}(\h^n).$ Obviously, we
have $d\nu =0$ and
\begin{equation*}
\nabla f=(1-f^{2})\mathcal{V}. \end{equation*}

By the hypothesis, $f$ is non-constant and so we may assume $\nabla f(p)\neq 0$ for every $p \in \h^n$ such that $f(p) \neq \pm 1$. The proof can be completed by following the same way as in that of Theorem. \ref{torq-hyp-nonexis}.
\end{proof}

\section*{Acknowledgements}
The authors express their sincere gratitude to Professor Sharief Deshmukh for his valuable and insightful discussions.




\begin{thebibliography}{33}


\bibitem{ag} N. Alshehri, M. Guediri, {\it Ricci solitons on Riemannian hypersurfaces generated by torse-forming vector fields in Riemannian and Lorentzian manifolds}, Axioms {\bf 14(5)} (2025), Art. no. 325.

\bibitem{amc} M. E. Aydin, A. Mihai, C. \"Ozg\"ur, {\it Rectifying submanifolds of Riemannian manifolds with anti-torqued axis}, J. Korean Math. Soc., Online first article May 30, 2025. https://doi.org/10.4134/JKMS.j240381

\bibitem{bo} A. M. Blaga, C. \"Ozg\"ur, {\it Almost $\eta$-Ricci and almost $\eta$-Yamabe solitons with torse-forming potential vector field}, Quaest. Math. {\bf 45(1)} (2022), 143--163.

\bibitem{cs} S. K. Chaubey, Y. J. Suh, {\it Riemannian concircular structure manifolds}, Filomat {\bf 36(19)} (2022), 6699--6711.


\bibitem{c0} B.-Y. Chen, {\it A simple characterization of generalized Robertson-Walker space-times}, Gen. Relativ. Gravit. {\bf 46} (2014), Art. no. 1833. 


\bibitem{crs3} B.-Y. Chen, {\it Rectifying submanifolds of Riemannian manifolds and torqued vector fields}, Kragujevac J. Math.
{\bf 41(1)} (2017), 93--103.

\bibitem{c01} B.-Y. Chen, {\it Classification of torqued vector fields and its applications to Ricci solitons}, Kragujevac J. Math. {\bf 41(2)} (2017), 239--250.

\bibitem{c00} B.-Y. Chen, L. Verstraelen, {\it A link between torse-forming vector fields and rotational hypersurfaces}, Int. J. Geom. Methods Mod. Phys. {\bf 14(12)} (2017), Art. no. 1750177.





\bibitem{cv} C. D. Collinson, E. G. L. R. Vaz, {\it Killing pairs constructed from a recurrent vector field},  Gen. Relativ. Grav. {\bf 27} (1995), 751--759.

\bibitem{dk} S. Deshmukh, V. A. Khan, {\it Geodesic vector fields and Eikonal equation on a Riemannian manifold}, Indagationes Mathematicae {\bf 30(4)} (2019), 542--552.

\bibitem{dan} S. Deshmukh, I. Al-Dayel, D. M. Naik, {\it On an anti-torqued vector field on Riemannian manifolds}, Mathematics {\bf 9(18)} (2021), Art. no. 2201. 

\bibitem{dta} S. Deshmukh, N. B. Turki, H. Alodan, {\it On the differential equation governing torqued
vector fields on a Riemannian manifold}, Symmetry {\bf 12(12)} (2020), Art. no. 1941.

\bibitem{dmtv} S. Deshmukh, J. Mikes, N. B. Turki, G.-E. Vilcu, {\it A note on geodesic vector fields}, Mathematics {\bf 8(10)} (2020), Art. no. 1663.

\bibitem{dr} K. L. Duggal, R. Sharma, {\it Symmetries of spacetimes and Riemannian manifolds,} Mathematics and its Applications, 487. Kluwer Academic Publishers, Dordrecht, 1999

\bibitem{id}  A. Ishan, S. Deshmukh, {\it Torse-forming vector fields on $m-$spheres}, AIMS Mathematics {\bf 7(2)} (2022), 3056--3066.

\bibitem{loy1} P. Lucas, J. A. Ortega-Yag\"ues, {\it Concircular hypersurfaces and concircular helices in space forms}, Mediterr. J. Math. {\bf 20(6)} (2023), Art. no. 320.

\bibitem{lee} J. M. Lee, {\it Introduction to smooth manifolds}, Graduate Texts in Mathematics, 218. Springer-Verlag, New York, 2003.

\bibitem{mam2} C. A. Mantica, L. G. Molinari, {\it On the Weyl and Ricci tensors of generalized Robertson-Walker space-times}, J. Math. Phys. {\bf 57(10) (2016)}, Art. no. 102502.

\bibitem{mam0} C. A. Mantica, L. G. Molinari, {\it Generalized Robertson-Walker space times, a survey}, Int. J. Geom. Methods Mod. Phys. {\bf 14(03)} (2017), Art. no. 1730001.

\bibitem{mam1} C. A. Mantica, L. G. Molinari, {\it Twisted Lorentzian manifolds: a characterization with torse-forming time-like unit vectors}, Gen. Relativ. Gravit. {\bf 49} (2017), Art. no. 51.

\bibitem{mam3} C. A. Mantica, L. G. Molinari, {\it A note on concircular structure space-times}, Commun. Korean Math. Soc. {\bf 34(2)} (2019), 633-635.

\bibitem{mih0} A. Mihai,  I. Mihai, {\it Torse forming vector fields and exterior concurrent vector fields on Riemannian manifolds and applications}, J. Geom. Phys. {\bf 73} (2013), 200--208.

\bibitem{na} D. M. Naik, {\it Ricci solitons on Riemannian manifolds admitting certain
vector field}, Ricerche di Matematica {\bf 73(1)} (2024), 531--546.

\bibitem{s0}A. A. Shaikh, {\it On Lorentzian almost paracontact manifolds with a structure of the concircular type}, Kyungpook Math. J. {\bf 43(2)} (2003), 305--314.

\bibitem{ya1} K. Yano, {\it Concircular geometry I. Concircular transformations}, Proc. Imp. Acad. Tokyo {\bf 16} (1940), 195--200.

\bibitem{ya0} K. Yano, {\it On torse-forming direction in a Riemannian space}, Proc. Imp. Acad. Tokyo {\bf 20} (1944),
340--345.

\bibitem{yms} H. I. Yoldas, S. E. Meric, E. Yasar, {\it On submanifolds of Kenmotsu manifold with torqued vector field}, Hacet. J. Math. Stat. {\bf 49(2)} (2020), 843--853.



\end{thebibliography}
\end{document}